\newtheorem{thm}{Theorem}[section]
\newtheorem{lem}[thm]{Lemma}
\theoremstyle{definition}
\newtheorem{remark}[thm]{Remark}
\newtheorem{se}[thm]{}
\renewcommand{\bar}{\overline}
\newcommand{\isomto}{\overset{\sim}{\rightarrow}}
\DeclareMathOperator{\Z}{\mathbf{Z}}
\DeclareMathOperator{\one}{\mathbbm{1}}
\begin{document}

\date{\today\ (version 1.0)} 
\title{Graph Reconstruction and Quantum Statistical Mechanics}
\author[G.~Cornelissen]{Gunther Cornelissen}
\address{\normalfont Mathematisch Instituut, Universiteit Utrecht, Postbus 80.010, 3508 TA Utrecht, Nederland}
\email{g.cornelissen@uu.nl}
\author[M.~Marcolli]{Matilde Marcolli}
\address{\normalfont Mathematics Department, Mail Code 253-37, Caltech, 1200 E.\ California Blvd.\ Pasadena, CA 91125, USA}
\email{matilde@caltech.edu}

\subjclass[2010]{05C99, 37B10,  46L55, 53C24, 81T75, 82B10 }
\keywords{\normalfont graph, graph reconstruction, quantum statistical mechanics, graph $C^*$-algebra}

\begin{abstract} \noindent We study in how far it is possible to reconstruct a graph from various Banach algebras associated to its universal covering, and extensions thereof to quantum statistical mechanical systems. It turns out that most the boundary operator algebras reconstruct only topological information, but the statistical mechanical point of view allows for complete reconstruction of multigraphs with minimal degree three. 
  \end{abstract}

\maketitle

\section{Introduction}

In this paper, we treat reconstruction problems for graphs from operator algebras. This fits into a broader area of research into reconstruction questions, such as the isospectrality problem in differential geometry \cite{Berard}, and the arithmetic equivalence problem in number theory \cite{Klingen} \cite{CM}.  In recent years, a new philosophy has emerged, stating roughly that mathematical objects that obey some form of measure-theoretic rigidity can be reconstructed up to suitable isomorphism from an associated \emph{quantum statistical mechanical system}, in the sense of noncommutative geometry. A prime example is given by the case of number fields under the dynamics of abelian class field theory \cite{CM}, as in the Bost-Connes system \cite{BC} \cite{HP}.  In this paper, we study a ``baby case'', namely, that of graphs. Ultimately, we hope our work provides a measure-theoretic or functional analytic handle on reconstruction problems for graph invariants from their decks (compare \cite{graphdeck}). 

\begin{se} We first set up basic notations and definitions that allow us to formulate the main theorem. 
By a graph $X$ we mean a finite, unoriented multigraph, allowing loops and multiple edges. We assume throughout that all vertices of $X$ have degree ($=$ valency, i.e., twice the number of loops plus the number of non-loops emanating from the vertex) at least three. In particular, the graph has no sinks. Let $T_X$ denote the universal covering tree, and $\Gamma_X$ the fundamental group. Choose generators $\gamma_1,\dots,\gamma_{g_X}$ for $\Gamma_X$. Let $\Lambda_X$ denote the boundary of $T_X$ (equivalence classes of rays, as a Cantor set). 
\end{se}

\begin{se} We will use dynamical systems in two slightly different forms, one involving groups, and one involving specific generators. We now outline these concepts.

We call a pair $(X,G)$ consisting of a topological space $X$ and a (topological) group $G$ acting on the space by homeomorphisms a \emph{dynamical system}, and say that two such systems $(X,G)$ and $(Y,H)$ are \emph{isomorphic} if there is a homeomorphism $$\Phi \colon X \isomto Y$$ equivariant w.r.t.\ a group isomorphism $\alpha \colon G \isomto H$, i.e., such that $$\Phi(gx)=\alpha(g)\Phi(x)$$ for all $x \in X$ and $g \in G$. 

We say two such systems are \emph{locally isomorphic} if there is a homeomorphism $\Phi \colon X \isomto Y$ and group isomorphisms $\alpha_x \colon G \isomto H$ for any $x \in X$, such that $$\Phi(gx)=\alpha_x(g)\Phi(x)$$ for all $x \in X$ and $g \in G$, and such that $$\alpha_x \colon X \times G \rightarrow  H$$ is locally constant in $x$. Of course, isomorphism implies local isomorphism, but the converse need not be true. 

\end{se}

\begin{se} The second concept depends on a choice of generators for the group. 
If the group $G$ is finitely generated, we can instead consider a \emph{multivariable dynamical system} $(X,\{g_i\})$, consisting of the space $X$ and \emph{given} generators $g_i$ of $G$. We say two such systems $(X,\{g_i\})$ and $(Y,\{h_j\})$ are \emph{conjugate} if there is a homeomorphism $\Phi \colon X \isomto Y$, $\{g_i\}$ and $\{h_j\}$ have the same number $n$ of elements, and there is a permutation $\sigma \in S_n$ such that $$\Phi(g_i x) = h_{\sigma(i)} \Phi(x)$$ for all $x \in X$ and $i=1,\dots,n$. 

We say two such systems are \emph{piecewise conjugate} if there is an open cover $$\{X_\sigma\}_{\sigma \in S_n} \mbox{ of }X$$ such that $$\Phi(g_i x) = h_{\sigma(i)} \Phi(x)$$ for all $x \in X_{\sigma}$. If two systems are conjugate for chosen sets of generators, then they are isomorphic, but the converse need not be true. The relation between local isomorphism and piecewise conjugacy is less clear. (However, in the case we study below, it turns out all these concepts are equivalent.)
\end{se}

\begin{se} We now come to the definitions of the operator algebras that we will consider. 
Let $$A_X:=C(\Lambda_X) \rtimes \Gamma_X$$ denote the full crossed product \emph{boundary operator algebra} generated as $C^*$-algebra by $C(\Lambda_X)$ and group-like elements $\mu_1,\dots,\mu_{g_X}$ corresponding to the generators $\gamma_i$. Let $A_X^{\dagger}$ denote the (non-involutive) subalgebra generated by $C(\Lambda_X)$ and the group elements, but not their $\ast$-companions $\mu_i^*$. 

We assume throughout that $X$ has first Betti number $g_X>1$ (they other cases are not interesting since if $g_X=0$, then $A_X=\mathbf{C}$ and if $g_X=1$, $A_X=\mathbf{C}^2 \times \Z$, since the fundamental group is abelian and the limit set consists of the two fixed points of the fundamental group). Our first main theorem says that for graphs, all these concepts essentially store the same amount of (limited) topological information, namely, the number of independent loops: 
\end{se}

\begin{thm}  \label{m} Let $X$ and $Y$ denote two graphs with $g_X,g_Y>1$, and all of whose vertices have degree at least three. Then the following are equivalent: 
\begin{enumerate}
\item[\textup{(a)}] $A_X \cong A_Y$ are strictly isomorphic as $C^*$-algebras; \label{a}
\item[\textup{(b)}]  $A_X \cong A_Y$ are stably isomorphic as $C^*$-algebras; \label{b}
\item[\textup{(c)}]  $A_X \cong A_Y$ are strongly Morita equivalent; \label{c}
\item[\textup{(d)}]  $A_X^{\dagger} \cong A_Y^{\dagger}$ are strictly isomorphic as Banach algebras (for any choice of generators of $\Gamma_X$ and $\Gamma_Y$); \label{d}
\item[\textup{(e)}]  The multivariable dynamical systems $(\Lambda_X,\{\gamma_{X,i}\})$ and $(\Lambda_Y,\{\gamma_{Y,i}\})$ are piecewise conjugate (for any choice of generators of $\Gamma_X$ and $\Gamma_Y$); \label{e}
\item[\textup{(f)}]  The multivariable dynamical systems $(\Lambda_X,\{\gamma_{X,i}\})$ and $(\Lambda_Y,\{\gamma_{Y,i}\})$ are conjugate (for any choice of generators of $\Gamma_X$ and $\Gamma_Y$); \label{f}
\item[\textup{(g)}]  The dynamical systems $(\Lambda_X,\Gamma_X)$ and $(\Lambda_Y,\Gamma_Y)$ are locally isomorphic; \label{f1}
\item[\textup{(h)}]  The dynamical systems $(\Lambda_X,\Gamma_X)$ and $(\Lambda_Y,\Gamma_Y)$ are isomorphic; \label{f2}
\item[\textup{(i)}]  $g_X = g_Y$. \label{g}
\end{enumerate}
\end{thm}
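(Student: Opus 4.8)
The plan is to prove the single chain of implications (i) $\Rightarrow$ (h) $\Rightarrow$ (everything else), together with the statement that each of (a)--(h) implies (i). Routing everything through (i) is the most economical option: it avoids having to prove directly that, e.g., stable isomorphism of the $A_X$ forces their strict isomorphism (which fails for general Kirchberg algebras, and here holds only because both algebras turn out to depend on $g_X$ alone). The one genuinely geometric step is (i) $\Rightarrow$ (h): the group $\Gamma_X=\pi_1(X)$ is free of rank $g_X$ and acts freely, metrically properly and cocompactly by isometries on the locally finite proper tree $T_X$ (the deck action, cocompact since $X$ is finite). By the \v{S}varc--Milnor lemma the orbit map $\Gamma_X \to T_X$ is a $\Gamma_X$-equivariant quasi-isometry, hence induces a $\Gamma_X$-equivariant homeomorphism of Gromov boundaries $\Lambda_X = \partial T_X \isomto \partial\Gamma_X$; thus $(\Lambda_X,\Gamma_X)$ is isomorphic, as a dynamical system, to the canonical action of $\Gamma_X$ on its own boundary. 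If $g_X=g_Y$, pick an abstract group isomorphism $\alpha\colon\Gamma_X\isomto\Gamma_Y$ (both are free of the same rank); $\alpha$ is a quasi-isometry and induces an $\alpha$-equivariant homeomorphism $\partial\Gamma_X\isomto\partial\Gamma_Y$. Composing the three identifications yields an isomorphism of dynamical systems $(\Lambda_X,\Gamma_X)\isomto(\Lambda_Y,\Gamma_Y)$, which is exactly (h).

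Next I would deduce (a), (g), (f) from (h), and then chase the remaining easy arrows. That (h) $\Rightarrow$ (g) is immediate. For (h) $\Rightarrow$ (a): an isomorphism $(\Phi,\alpha)$ of dynamical systems induces a strict isomorphism $C(\Lambda_X)\rtimes\Gamma_X\isomto C(\Lambda_Y)\rtimes\Gamma_Y$ of full crossed products via $f\mapsto f\circ\Phi^{-1}$ and $\mu_\gamma\mapsto\mu_{\alpha(\gamma)}$. For (h) $\Rightarrow$ (f): fix free bases $\{\gamma_{X,i}\}$, $\{\gamma_{Y,j}\}$ on both sides (a generating set of cardinality equal to the rank of a free group is automatically a basis, which is why $\gamma_1,\dots,\gamma_{g_X}$ is one); the elements $\alpha(\gamma_{X,i})$ form a basis of $\Gamma_Y$, so some $\beta\in\mathrm{Aut}(\Gamma_Y)$ sends $\alpha(\gamma_{X,i})$ to $\gamma_{Y,\sigma(i)}$ for a suitable permutation $\sigma$, and $\widetilde\beta\circ\Phi$ realises the conjugacy, where $\widetilde\beta$ is the boundary homeomorphism induced by $\beta$. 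Finally (a) $\Rightarrow$ (b) (compose with the stabilisation isomorphism) and (b) $\Leftrightarrow$ (c) by Brown--Green--Rieffel since everything in sight is separable; (f) $\Rightarrow$ (e) (take the open cover to be $X$ itself for the relevant permutation) and (f) $\Rightarrow$ (d) (a conjugacy of multivariable systems visibly yields a completely isometric isomorphism $A_X^{\dagger}\cong A_Y^{\dagger}$). This already shows (i) implies all of (a)--(h), so in particular $A_X$, $A_X^{\dagger}$ and the two flavours of dynamical system depend, up to the relevant equivalence, only on $g_X$.

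It then remains to show each of (a)--(h) implies (i). Four of these are formal: (e) $\Rightarrow$ (i) because piecewise conjugate multivariable systems have, by definition, the same number of generators, namely $g_X=g_Y$; (f) $\Rightarrow$ (e); (g) $\Rightarrow$ (i) because a local isomorphism carries group isomorphisms $\alpha_x\colon\Gamma_X\isomto\Gamma_Y$ and a free group determines its rank; and (h) $\Rightarrow$ (g). For the $C^*$-side, (a) $\Rightarrow$ (b) $\Rightarrow$ (c) and it suffices to prove (c) $\Rightarrow$ (i): $K$-theory is a strong-Morita invariant, so we need only check that $K_*(A_X)$ remembers $g_X$. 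Since the action of $\Gamma_X\cong F_{g_X}$ on $\Lambda_X$ is minimal, topologically free and (topologically) amenable, $A_X$ is a unital Kirchberg algebra in the UCT class; identifying it with the Cuntz--Krieger algebra of the edge-adjacency matrix of $X$ (equivalently, running the Pimsner--Voiculescu-type sequence for $F_{g_X}$ acting on $T_X$) and using the reduction above to the rose with $g_X$ petals, one computes $K_1(A_X)\cong\Z^{g_X}$ with $K_0(A_X)$ of free rank $g_X$, whence $g_X=g_Y$. Lastly, (d) $\Rightarrow$ (i): the $C^*$-envelope of the tensor algebra $A_X^{\dagger}$ is $A_X$, so (d) implies (a) and hence (i); alternatively one cites the Davidson--Katsoulis--Ramsey rigidity theorem, by which an isomorphism of these tensor algebras forces piecewise conjugacy, i.e. (d) $\Rightarrow$ (e).

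The genuinely non-formal inputs are three: the boundary identification $\Lambda_X\cong\partial\Gamma_X$ of the first step (routine, but it is what powers the whole statement), the $K$-theory computation, which depends on recognising $A_X$ as a Cuntz--Krieger/Kirchberg algebra satisfying the UCT, and the handling of the non-selfadjoint algebra in (d), where one must know either that a Banach-algebra isomorphism of these particular tensor algebras is automatically completely isometric — so that the $C^*$-envelope argument applies — or else invoke the multivariable rigidity results directly. I expect (d) to be the most delicate point; the rest is assembling standard machinery.
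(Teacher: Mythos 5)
Your proposal is correct and, in outline, matches the paper: the geometric engine is the same (the \v{S}varc--Milnor identification $\Lambda_X\cong\partial\mathrm{Cay}(\Gamma_X,\{\gamma_{X,i}\})$ giving (i)$\Rightarrow$(f)/(h), Davidson--Katsoulis for (d)$\Rightarrow$(e), counting generators for (e)$\Rightarrow$(i) and (g)$\Rightarrow$(i), and the explicit covariance formula for (f)$\Rightarrow$(d)). The one genuine divergence is in the $C^*$-algebra block (a)--(c). The paper identifies $A_X$ with the Cuntz--Krieger algebra of the rose with $g_X$ petals, computes $K_0=\Z^{g_X}\times\Z/(g_X-1)$ together with the class of the unit, and then invokes the Kirchberg--Phillips classification to get both directions of (a)$\Leftrightarrow$(i) at once. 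You instead split the two directions: (c)$\Rightarrow$(i) from Morita invariance of $K$-theory alone, and (i)$\Rightarrow$(a) by routing through (h) and using functoriality of the (full $=$ reduced) crossed product under an isomorphism of the boundary dynamical systems. This avoids the classification theorem entirely for the existence direction, at the mild cost of still needing the $K$-theory computation for the converse; both routes are sound, and yours uses slightly lighter machinery. On (d), your fallback to the Davidson--Katsoulis piecewise-conjugacy rigidity is exactly what the paper does (their Thm.~9.2 applies to isomorphisms of the semicrossed products without assuming complete isometry), and you are right to flag the $C^*$-envelope route as the more delicate alternative; you correctly identified this as the pressure point of the argument.
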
 

The proof, given in the next section, combines work of Robertson on boundary operator algebra $K$-theory \cite{R} with a result of Davidson and Katsoulis on the relations between piecewise conjugacy of dynamical systems and operator algebras \cite{DK}, and the theory of hyperbolic groups. 

By stark contrast, if we allow for further noncommutative dynamics, implemented as a \emph{quantum statistical mechanical system} by a one-parameter group of automorphisms of the dagger algebra, the graph is uniquely determined up to isomorphism.

\begin{thm} \label{mm} 
Let $X$ and $Y$ denote two graphs with $g_X,g_Y>1$, and all of whose vertices have degree at least three.  Then one can define a one-parameter subgroup of automorphisms (``time evolution'') $$ \sigma_X \colon \mathbf{R} \rightarrow \mathrm{Aut}(A_X^\dagger) $$ as follows: choose a base point $x_0$ in the universal covering tree $T_X$ of $X$, and set 
\begin{equation}
\sigma_t\left( \sum_{\gamma} f_\gamma(\xi) \mu_\gamma\right) = \sum_{\gamma} \exp\left(it \lim\limits_{{x_3 \in T_X} \atop{x_3\to \xi}} d(x_0,x_3)-d(\gamma^{-1}x_0,x_3)\right)  f_\gamma(\xi) \mu_\gamma
\end{equation}
on elements $\sum f_\gamma \mu_\gamma$ of the algebraic crossed product algebra (with $f_\gamma \in C(\Lambda_X)$). 
Then the quantum statistical mechanical systems $$(A_X^{\dagger},\sigma_X) \cong (A_Y^{\dagger},\sigma_Y)$$ are isomorphic if and only if $X$ and $Y$ are isomorphic as graphs. 
\end{thm}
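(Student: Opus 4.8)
The plan is to prove the ``if'' and ``only if'' directions separately, the second being the real content. For the ``if'' direction I would simply note that a graph isomorphism $X\isomto Y$ lifts to an isometry $\psi\colon T_X\isomto T_Y$ equivariant for an isomorphism $\alpha\colon\Gamma_X\isomto\Gamma_Y$; choosing the base point of $T_Y$ to be $\psi(x_0)$, one reads off directly from formula~(1), after the substitution $y=\psi(x_3)$, that the Busemann cocycle of $Y$ pulls back to that of $X$, so that the induced isomorphism $A_X^\dagger\isomto A_Y^\dagger$ intertwines $\sigma_X$ with $\sigma_Y$. Since changing the base point alters the time evolution only by an inner (coboundary) perturbation, the QSM systems are isomorphic for every choice of base points.

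For the converse, suppose $\phi\colon A_X^\dagger\isomto A_Y^\dagger$ intertwines the time evolutions, say $\phi\circ\sigma_{X,t}=\sigma_{Y,\lambda t}\circ\phi$ for some $\lambda>0$ (take $\lambda=1$ if the operative notion of QSM-isomorphism forbids rescaling the parameter). Forgetting $\sigma$, this is exactly hypothesis~(d) of Theorem~\ref{m}, so the equivalences there already give $g_X=g_Y$; more importantly, the proof of Theorem~\ref{m}---through the Davidson--Katsoulis rigidity of the semicrossed-product (``dagger'') algebras---exhibits $\phi$ as implemented by the dynamical data: there are a homeomorphism $\Phi\colon\Lambda_X\isomto\Lambda_Y$ with $\Phi(\gamma\xi)=\alpha(\gamma)\Phi(\xi)$ for an isomorphism $\alpha\colon\Gamma_X\isomto\Gamma_Y$, together with a ``gauge'' $\chi$, such that $\phi(f\mu_\gamma)=\chi(\gamma)\,(f\circ\Phi^{-1})\,\mu_{\alpha(\gamma)}$. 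Substituting formula~(1) into the intertwining relation and comparing the $C(\Lambda_Y)$-coefficients of $\mu_{\alpha(\gamma)}$---the gauge cancels---I obtain the compatibility of Busemann cocycles
\[ c_X(\gamma,\xi)=\lambda\,c_Y\big(\alpha(\gamma),\Phi(\xi)\big)\qquad(\gamma\in\Gamma_X,\ \xi\in\Lambda_X), \]
where $c_X(\gamma,\xi):=\lim_{x_3\to\xi}\big(d(x_0,x_3)-d(\gamma^{-1}x_0,x_3)\big)$. As both sides are $\Z$-valued (unit edge lengths) and, on a locally finite tree all of whose valencies are $\ge 3$, $c_X$ takes values of both parities, this forces $\lambda=1$.

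It then remains to reconstruct the tree, hence the graph, from $(\Lambda_X,\Gamma_X,c_X)$. Writing $a=x_0$ and $b=\gamma^{-1}x_0$, one has $c_X(\gamma,\xi)=d(a,m)-d(b,m)$, where $m$ is the median of $a,b,\xi$, a \emph{vertex} of the segment $[a,b]$; thus $c_X(\gamma,\cdot)$ is a locally constant (hence continuous, maximum-attaining) $\Z$-valued function on the Cantor set $\Lambda_X$, and as $m$ runs over the vertices of $[a,b]$---in particular reaching $b$, since $b$ lies on a geodesic ray issuing from $a$ as $T_X$ has no valence-one vertex---this maximum equals $d(a,b)$. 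Hence the based length function is recovered by
\[ \ell_X(\gamma):=d(x_0,\gamma x_0)=\max_{\xi\in\Lambda_X}c_X(\gamma,\xi), \]
and cocycle compatibility gives $\ell_X(\gamma)=\ell_Y(\alpha(\gamma))$ for all $\gamma$, equivalently $d(\gamma_1 x_0,\gamma_2 x_0)=d(\alpha(\gamma_1)x_0',\alpha(\gamma_2)x_0')$. So $\gamma x_0\mapsto\alpha(\gamma)x_0'$ is an $\alpha$-equivariant bijective isometry of the orbit metric spaces $\Gamma_X x_0$ and $\Gamma_Y x_0'$; since in a tree the convex hull of a subset and its metric are determined by the four-point condition on that subset, it extends uniquely and equivariantly to an isometry of convex hulls. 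Finally $\mathrm{Hull}(\Gamma_X x_0)=T_X$: every vertex $w$ has valency $\ge 3$ and separates $T_X$ into $\ge 3$ infinite subtrees, each of which meets $\Gamma_X x_0$ because the $\Gamma_X$-action on $\partial T_X\cong\partial\Gamma_X$ is minimal and $\Gamma_X x_0$ accumulates on all of $\partial T_X$, so $w$ lies on some $[gx_0,g'x_0]$ (and similarly for half-edges, and for $Y$). We conclude that the isometry extends to an $\alpha$-equivariant isomorphism of simplicial trees $T_X\isomto T_Y$, which descends to a graph isomorphism $X=\Gamma_X\backslash T_X\isomto\Gamma_Y\backslash T_Y=Y$. (Alternatively, one can extract the translation length function $\|\gamma\|=\ell_X(\gamma^2)-\ell_X(\gamma)$ and invoke the rigidity of minimal $\Gamma$-trees under their length functions.)

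The step I expect to be the main obstacle is the passage from the purely algebraic hypothesis to the geometric identity $c_X(\gamma,\cdot)=c_Y(\alpha(\gamma),\Phi(\cdot))$: one must use the operator-algebraic rigidity underlying Theorem~\ref{m} to pin $\phi$ down as implemented by $(\Phi,\alpha)$ up to gauge, and then check that the time evolution---being \emph{diagonal} with respect to the crossed-product grading by $\Gamma_X$---is transported essentially verbatim; it is exactly this diagonal datum that records the metric of $T_X$ and therefore makes the combinatorial reconstruction of $X$ possible. The remaining points (eliminating the rescaling $\lambda$, base-point independence, and the tree-geometry bookkeeping above) should be routine by comparison.
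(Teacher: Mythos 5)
Your proposal is essentially correct, but the core reconstruction step takes a genuinely different route from the paper. Both arguments begin the same way: the dagger-algebra isomorphism is fed into Theorem \ref{m} (via Davidson--Katsoulis) to produce an equivariant boundary homeomorphism $(\Phi,\alpha)$. From there the paper does \emph{not} compare Busemann cocycles directly; instead it invokes the Kumjian--Renault correspondence between $\mathrm{KMS}_\beta$-states and conformal measures, Coornaert's classification of the latter (unique $\beta$, Patterson--Sullivan measures), concludes that $\Phi$ is absolutely continuous, and then applies the Hersonsky--Paulin measure-rigidity theorem to upgrade $\Phi$ to a M\"obius map and hence a tree isometry. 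You bypass all of that: you transport the diagonal cocycle through the intertwining relation, recover the based length function via $d(x_0,\gamma x_0)=\max_{\xi}c_X(\gamma,\xi)$ (correct, since the median of $x_0,\gamma^{-1}x_0,\xi$ sweeps out all of $[x_0,\gamma^{-1}x_0]$ when all valencies are $\geq 3$), and rebuild the tree as the convex hull of the orbit. Your route is more elementary and self-contained (no KMS-state machinery, no measure rigidity), and it makes transparent exactly which datum of the time evolution encodes the metric; the paper's route needs only the pullback of states rather than the full cocycle identity, so it leans less heavily on knowing the precise form of $\varphi$ on each graded piece. Note, though, that both arguments ultimately rest on the same under-justified structural claim — that $\varphi$ is implemented by $(\Phi,\alpha)$ up to gauge (the paper needs this at least on the diagonal to identify $\varphi^*\tau_{Y,y_0}$) — so you have not introduced a gap the paper avoids, and you correctly flag this as the delicate point.

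Two small caveats. First, your parity argument for excluding a rescaling $\lambda\neq 1$ is flawed: for fixed $\gamma$ one has $c_X(\gamma,\xi)=2d(x_0,m)-d(x_0,\gamma^{-1}x_0)$, so all values have the \emph{same} parity, and if $X$ is bipartite (e.g.\ $K_{3,3}$) every $d(x_0,\gamma x_0)$ is even; fortunately the paper's definition of isomorphism already imposes $\lambda=1$, so nothing is lost. Second, in the final step you should observe that the isometry $T_X\isomto T_Y$ is automatically simplicial because every vertex is a branch point (valency $\geq 3$), which is what lets it descend to a genuine graph isomorphism.
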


The time evolution is defined in terms of Busemann functions as in the work of Coornaert and Lott \cite{Lott}, and the proof is based on a classification of KMS-states for such algebras by Kumjian and Renault \cite{Renault}, combined with  measure-theoretic rigidity for trees from Paulin and Hersonsky \cite{Paulin}. The choice of a base point is irrelevant and relates to the the fact that Patterson-Sullivan measures naturally form a conformal density, rather than a measure.

\begin{remark}
This paper studies reconstruction of graphs from various structures related to the boundary operator algebra $A_X$. Similar questions can be asked for various Cuntz-Krieger algebras that are naturally associated to a graph, such as the one associated to the vertex adjancency matrix of the graph (also known as the \emph{graph $C^*$-algebra}), or the Cuntz-Krieger algebra associated to the Bass-Hashimoto edge adjancency operator. We will comment on this problem at the appropriate place. 
\end{remark}

\begin{remark}
The results of this paper carry over one-to-one to the case of Riemann surfaces uniformized by Schottky groups; we refrain from presenting details. 
\end{remark}

\section{Proof of Theorem \ref{m}}

In this first section, we consider the various natural ``isomorphism'' conditions on the boundary operator algebras associated to a graph, and discuss in how far they (do not) determine the graph. We keep the notations from the introduction. A schematic representation of the structure of the proof is given below: 
$$\xymatrix{ 
& \text{(g)} \ar@{->}[d]^{\text{\ref{5}}} \ar@{<-}[r]^{\text{\ref{5}}}  &  \text{(h)} \ar@{<-}[r]^{\text{\ref{5}}} & \text{(f)}  \ar@{->}[d]^{\text{\ref{5}}} \ar@{<-}[dll]_{\text{\ref{5}}} \\ 
\text{(c)} \ar@{<->}[r]_{\text{\ref{1}}} \ar@{<->}[d]^{\text{\ref{1}}}   & \text{(i)} \ar@{<-}[r]_{\text{\ref{3}}} \ar@{<->}[d]_{\text{\ref{1}}}   &  \text{(e)} \ar@{<-}[r]_{\text{\ref{3}}} & \text{(d)} \\
\text{(b)} \ar@{<->}[r]^{\text{\ref{1}}}  &  \text{(a)} &  &
     }$$
     
\begin{se}[$C^*$-algebra isomorphism] \label{1}
The boundary $\Lambda_X$ of the tree $T_X$ is the set of equivalence classes of half-rays in $T_X$, with equivalent rays being equal up to finitely many edges. The full crossed product algebra $C(\Lambda_X) \rtimes \Gamma_X$ is the universal $C^*$-algebra generated by the commutative $C^*$-algebra $C(\Lambda_X)$ and the image of a unitary
representation $\pi$ of $\Gamma$, satisfying the covariance relation $$ \pi(\gamma)f\pi(\gamma)^{-1}=f \circ \gamma^{-1}$$
for $f \in C(\Lambda_X), \gamma \in \Gamma$ and $x \in \Lambda_X.$  It is also strictly isomorphic as $C^*$-algebra to the Cuntz-Krieger algebra of the edge adjacency matrix (=Bass-Hashimoto operator, cf.\ \cite{CLM}) of a graph with one vertex and $g_X$ loops, i.e., the Cuntz-Krieger algebra corresponding to the matrix $$\bigl( \begin{smallmatrix} 
  \one & \one-\mathbf{1}\\
  \one-\mathbf{1} & \one 
\end{smallmatrix} \bigr)$$
where $\one$ is a square $g_X$-size matrix all of whose entries are one, and $\mathbf{1}$ is the $g_X \times g_X$-unit matrix cf.\ \cite{R}.  Since this algebra fits into the Kirchberg-Phillips classification, it is determined by its $K_0$-group and the position of the unit (cf.\ R{\o}rdam \cite{Rordam}), which are easily computed for this matrix to be $$K_0 = \Z^{g_X} \times \Z/(g_X-1),$$ and the class of the unit being a generator for the finite cyclic part. Thus, both strict isomorphism and Morita equivalence of such algebras is equivalent to equality of the Betti numbers of the graphs: $g_X=g_Y$. Also, since the algebras are separable, strong Morita equivalence is the same as stable isomorphism (i.e., isomorphism after tensoring with the algebra of compact operators). This proves the equivalence of (a), (b), (c) and (i) of Theorem \ref{m}. 
\end{se}

\begin{se}[{Non-involutive boundary algebra isomorphism}] \label{3}
We now switch to the non-involutive subalgebra $A_X^{\dagger}$ of $A_X$. As a subalgebra of the universal crossed product $C^*$-algebra $A_X$, the algebra $A_X^{\dagger}$ can be described as the \emph{semicrossed product} algebra of the chosen generators of the free group $\Gamma_X$ acting on the (totally discontinous space) $\Lambda_X$. This algebra is universal for the equivariance relation given by the group action with the group generator elements of the free group being contractions or isometries (which is the same, by a suitable dilation theorem, cf.\ \cite{DK}). Then one of the main results (Thm.\ 9.2) in \cite{DK} states that $A^\dagger_X \cong A^\dagger_Y$ implies a \emph{piecewise topological conjugacy} of the dynamical systems $(\Lambda_X, \{\gamma_{X,i}\})$ and $(\Lambda_Y, \{\gamma_{Y,j}\})$ (hence in our situation, it says that (d) implies (e)). 

Piecewise conjugacy implies that the ranks of the group $\Gamma_X$ and $\Gamma_Y$ are equal, whence $g_X=g_Y$, so that (e) implies (i). 
\end{se}

\begin{se}[Betti number condition] \label{5}
Now we prove that two such dynamical systems are conjugate (for any choice of generators of $\Gamma_X$ and $\Gamma_Y$) precisely if $g_X=g_Y$. We use the fact that the group $\Gamma_X$ is hyperbolic (since we assume $g_X>1$), and the action of $\Gamma_X$ on $T_X$ is properly discontinuous with compact quotient. These conditions imply that there is a canonical $\Gamma_X$-equivariant homeomorphism $$\varphi_X \colon \partial \mathrm{Cay}(\Gamma_X,\{\gamma_{X,i}\}) \isomto \Lambda_X,$$ cf.\ \cite{CDP}, Thm.\ 4.1, where $\partial \mathrm{Cay}(\Gamma_X,\{\gamma_{X,i}\})$ is the boundary of a Cayley graph $$\mathrm{Cay}(\Gamma_X,\{\gamma_{X,i}\})$$ for the free group $\Gamma_X$ of rank $g_X$ for the symmetrization of the chosen set of generators $\{\gamma_{X,i}\}$.  Namely, the map $$\varphi_X \colon \mathrm{Cay}(\Gamma_X,\{\gamma_{X,i}\}) \rightarrow T_X \colon \gamma \mapsto \gamma x_0$$ is a quasi-isometry for the usual tree distances, so it will induce a homeomorphism of boundaries. Observe that a different choice of generators also changes the metric to a Lipschitz-equivalent one, inducing the same homeomorphism in the boundary. 

Since $\varphi_X$ is equivariant w.r.t.\ the action by left multiplication of $\Gamma_X$, the resulting boundary map is indeed equivariant.  Now suppose that $g_X=g_Y$. Then we can choose a group isomorphism $$\alpha \colon \Gamma_X \cong \Gamma_Y,$$ with $\alpha(\gamma_{X,i})=\gamma_{Y,i}$ (so extending a bijection between the given sets of generators). then $\alpha$ induces a corresponding equivariant boundary map, and this implies that we have a composed homeomorphism 
$$ \Phi \colon \Lambda_X \xrightarrow{\varphi_X^{-1}} \partial \mathrm{Cay}(\Gamma_X,\{\gamma_{X,i}\}) \xrightarrow{\alpha^*} \partial \mathrm{Cay}(\Gamma_Y,\{\gamma_{Y,i}\}) \xrightarrow{\varphi_Y} \Lambda_Y, $$
which intertwines the actions of $\Gamma_X$ and $\Gamma_Y$ with $$\Phi(\gamma_{X,i} x) = \alpha(\gamma_{Y,i}) \Phi(x).$$ This is precisely a conjugacy of the multivariable dynamical systems, and proves that (i) implies (f). Of course, (f) also implies (e). Since (f) implies (h), which implies (g), which it its turn implies (i). We now prove that (f) implies (d): if the conjugacy is implemented by a homeomorphism $\Phi \colon \Lambda_X \isomto \Lambda_Y$ with $\Phi(\gamma_{X,i} x) = \gamma_{Y,\sigma(i)} \Phi(x),$ for some permutation $\sigma \in S_{g_X}$, then we define the algebra isomorphism $\varphi \colon A_X^{\dagger} \isomto A_Y^{\dagger}$ on elements $\sum f_i \mu_{\gamma_{X,i}}$ of the algebraic semicrossed product subalgebra by 
$$ \varphi\left( \sum f_i \mu_{\gamma_{X,i}}\right) := \sum f_i \circ \Phi^{-1} \mu_{\gamma_{Y,\sigma(i)}}. $$
This finishes the proof of the theorem. \hfill $\Box$
\end{se}

\begin{remark} We have proven that in our system, local isomorphism (g) implies isomorphism (h), but in a very indirect way. One can also prove this directly, using only topological properties of the system, namely, the fact that the complement of the fixed point set is dense in the space. Since this argument might be of independent interest, we present it here. 
\end{remark} 

\begin{lem}
Suppose that $(\Lambda_X,\Gamma_X)$ and $(\Lambda_Y,\Gamma_Y)$ are two locally isomorphic dynamical systems. Denote the total fixed point set by 
$$ F_X:=\bigcup_{\gamma \in \Gamma_X-\{1\}} \mathrm{Fix}(\gamma), $$
with $\mathrm{Fix}(\gamma)$ is the set of fixed points of $\gamma$. Then if the complement of the fixed point set $\Lambda_X \setminus F_X$ is dense in $\Lambda_X$, the systems $(\Lambda_X,\Gamma_X)$ and $(\Lambda_Y,\Gamma_Y)$ are conjugate. \end{lem}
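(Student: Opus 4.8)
The plan is to recover from the local isomorphism the homeomorphism $\Phi\colon\Lambda_X\isomto\Lambda_Y$ together with the family of group isomorphisms $\alpha_x\colon\Gamma_X\isomto\Gamma_Y$, locally constant in $x$, satisfying $\Phi(gx)=\alpha_x(g)\Phi(x)$, and then to prove that $\alpha_\bullet$ is in fact \emph{constant}: once $\alpha_x\equiv\alpha$ for one fixed isomorphism, the pair $(\Phi,\alpha)$ is by definition an isomorphism (``conjugacy'') of the two dynamical systems. Two preliminaries. For each $g\in\Gamma_X$ the map $x\mapsto\alpha_x(g)$ is locally constant, hence continuous for the discrete topology on $\Gamma_Y$ and, by compactness of $\Lambda_X$, takes only finitely many values; so $\alpha_\bullet$ is constant on the members of a finite clopen partition of $\Lambda_X$. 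And if $gx=x$ for some $g\neq 1$, then $\Phi(x)=\alpha_x(g)\Phi(x)$ with $\alpha_x(g)\neq 1$, so $\Phi(x)\in F_Y$; the same argument applied to $\Phi^{-1}$ gives $\Phi^{-1}(F_Y)\subseteq F_X$, whence $\Phi(F_X)=F_Y$, so $\Phi$ restricts to a homeomorphism between the dense subsets $\Lambda_X\setminus F_X$ and $\Lambda_Y\setminus F_Y$.

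The engine is the cocycle identity coming from associativity of the action: computing $\Phi(ghx)$ as $\alpha_x(gh)\Phi(x)=\alpha_x(g)\alpha_x(h)\Phi(x)$ and also as $\alpha_{hx}(g)\alpha_x(h)\Phi(x)$ shows that $\alpha_x(h)^{-1}\alpha_x(g)^{-1}\alpha_{hx}(g)\alpha_x(h)$ fixes $\Phi(x)$. For $x\notin F_X$ the $\Gamma_Y$-stabiliser of $\Phi(x)$ is trivial, so $\alpha_{hx}=\alpha_x$ for all $h\in\Gamma_X$; thus $\alpha_\bullet$ is constant along every $\Gamma_X$-orbit contained in $\Lambda_X\setminus F_X$. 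To upgrade this to genuine constancy I would use that every nontrivial $g\in\Gamma_X$ acts on $T_X$ as a hyperbolic isometry with north--south dynamics on $\Lambda_X$, so $g^nx\to\xi_g^+$ (the attracting endpoint of its axis) for every $x\neq\xi_g^-$. Given $x\in\Lambda_X\setminus F_X$ (so $x\neq\xi_g^\pm$), orbit-constancy gives $\alpha_x=\alpha_{g^nx}$ for all $n$, while local constancy of $\alpha_\bullet$ near $\xi_g^+$ gives $\alpha_{g^nx}=\alpha_{\xi_g^+}$ for $n\gg 0$; hence $\alpha_x=\alpha_{\xi_g^+}$, a value independent of $x$. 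So $\alpha_\bullet$ is constant on the dense set $\Lambda_X\setminus F_X$, hence (by continuity) on all of $\Lambda_X$; its common value $\alpha$ is a group isomorphism, and $(\Phi,\alpha)$ is the desired isomorphism of dynamical systems.

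The step I expect to be the main obstacle is exactly this middle one --- passing from ``$\alpha_\bullet$ is constant on each orbit of a point with trivial stabiliser'' to ``$\alpha_\bullet$ is globally constant'' --- since the cocycle identity by itself only propagates $\alpha_\bullet$ within a single orbit, and some genuine input on how the orbits (equivalently, the $\Gamma_X$-invariant clopen sets) sit inside $\Lambda_X$ is unavoidable. The density of the complement of the fixed-point set does this work twice over: it provides a dense supply of points at which all stabilisers are trivial, so that the cocycle identity has teeth, and, since the locus $\{x:\Phi(gx)=\alpha(g)\Phi(x)\text{ for all }g\}$ is closed, it lets the intertwining relation established off $F_X$ hold on all of $\Lambda_X$. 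Alternatively, one can invoke that the $\Gamma_X$-action on $\Lambda_X$ is minimal (a free group of rank $>1$ is non-elementary), so that every orbit inside $\Lambda_X\setminus F_X$ is already dense, which again gives global constancy by continuity.
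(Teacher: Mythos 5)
Your proof is correct and follows essentially the same route as the paper: derive the cocycle identity from associativity of the action, use triviality of the $\Gamma_Y$-stabilisers off $F_Y$ to get constancy of $\alpha_\bullet$ along every orbit contained in $\Lambda_X\setminus F_X$, and then pass to global constancy via density and local constancy. If anything, your handling of the final step is more careful than the paper's, which passes from ``constant on each orbit in $\Lambda_X\setminus F_X$'' to ``constant on the dense subset $\Lambda_X\setminus F_X$'' without comment; your north--south-dynamics (equivalently, minimality of the boundary action) argument is precisely what is needed to justify that step.
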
 

\begin{proof} Suppose given the locally constant group isomorphisms $\alpha_x$ that intertwine the actions. 
The equivariance implies on the one hand
$$ \Phi(\gamma_1 \gamma_2 x) = \alpha_x(\gamma_1 \gamma_2) \Phi(x) = \alpha_x(\gamma_1) \alpha_x(\gamma_2) \Phi(x) =
\alpha_x(\gamma_1)  \Phi(\gamma_2 x), $$
and on the other hand
$$  \Phi(\gamma_1 \gamma_2 x) = \Phi(\gamma_1 (\gamma_2 x)) = \alpha_{\gamma_2 x}(\gamma_1) \Phi(\gamma_2 x). $$
We conclude that for all $x \in \Lambda_X$ and for all $\gamma_1, \gamma_2 \in \Gamma_X$, we have 
\begin{equation} \label{nofix} [\alpha_{\gamma_2 x}(\gamma_1)^{-1} \alpha_x(\gamma_1)] \Phi(\gamma_2 x) = \Phi(\gamma_2 x). \end{equation}
Now choose $x$ such that 
\begin{equation} \label{suchx} x \in \Lambda_X \setminus F_X, \end{equation}
where  $F_X$ is the fixed point set. This implies that 
$\gamma_2 x \in \Lambda_X \setminus F_X, $
and then in turn we find
$ \Phi(\gamma_2 x) \in \Lambda_Y \setminus F_Y, $
since if $\gamma x = x$ for some $\gamma \in \Gamma_X$, then $\Phi(\gamma x) = \Phi(x)$, and equivariance gives $\alpha_x(\gamma) \Phi(x) = \Phi(x)$, i.e., there is an element (namely, $\alpha_x(\gamma)$) in $\Gamma_Y$ that fixes $\Phi(x)$. 
We conclude from (\ref{nofix}) that for such $x$ with (\ref{suchx}), it holds true that 
$$ \alpha_{\gamma_2 x} = \alpha_x \mbox{ for all } \gamma_2 \in \Gamma_X,$$
i.e., $\alpha_x$ is locally constant on orbits of the non-fixed point set. Since $\alpha_x$ is continuous in $x$, and constant on the dense subset $\Lambda_X \setminus F_X$ of its domain, we conclude that $\alpha_x$ is constant, independent of $x$. \end{proof}

The Lemma applies in our case, since the set $\Lambda_X \setminus F_X$ is dense in $\Lambda_X$: indeed, if $x \in F_X$ is fixed by some $\gamma \in \Gamma_X$, then there exists $y \in \Lambda_X \setminus F_X$, but the $\gamma$-orbit of $y$ accumulates at $x$ (being one of the fixed points of $\gamma$). Hence $x$ belongs to the closure of $\Lambda_X \setminus F_X$. This proves that the closure of $F_X$ belongs to the closure of $\Lambda_X \setminus F_X$, and the proof is finished since the closure of $F_X$ is all of $\Lambda_X$.

\begin{remark}[Other $C^*$-algebras of graphs] Alternatively, one might want to study directly the relation between (stable) isomorphism of the Cuntz-Krieger algebras associated to the vertex adjacency matrix of two (oriented) graphs. Again, the results of R{\o}rdam et.\ al.\ imply that the $K_0$-group of the algebra determine such isomorphism, and those are given by $$\Z^{|VX|}/\mathrm{im}(1-A_X)$$ for a graph $X$ with vertex set $VX$ and adjacency operator $A_X$. Recently, S{\o}rensen \cite{Sorensen} characterized isomorphism of such algebras purely in terms of the existence of a sequence of elementary operations that transforms one graph into the other, so that one now has a \emph{graph theoretical characterization of stable isomorphism.} It would be interesting to have such a description also for strict isomorphism. 
\end{remark}

\begin{remark}
The situation becomes much simpler if one instead studies (stable) isomorphism of the Cuntz-Krieger algebras associated to the edge adjancency matrix (Bass-Hashimoto operator) of the graph. This operator $\mathbb{T}_X$ is defined as follows (compare with \cite{Bass} or \cite{CLM}). Choose a random orientation of the edged $\{e_1,\dots,e_n\}$ of $X$ and let $\{e_1,\dots,e_n,\bar{e}_1,\dots,\bar{e}_n\}$ denote the set of all edges of $X$ and their inverses. The operator $\mathbb{T}_X$ is the $2n \times 2n$-matrix given by $T(e,f)=1$ if and only if the terminal vertex of $e$ is the initial vertex of $f$, but $f \neq \bar e$. Otherwise, $T(e,f)=0$.  This operator is very natural for counting non-backtracking paths in the graph (relating to the Ihara zeta function). 

In \cite{CLM}, one finds a geometric study of this classification problem, and the answer is that stable isomorphism of such algebras for graphs $X$ and $Y$ is the same as equality of Betti numbers $g_X=g_Y$, and strict isomorphism means that we also have and equality $$(|EX|,g_X-1)=(|EY|,g_Y-1)$$ of greatest common divisors of the Euler characteristic of the graphs with their number of edges. 
\end{remark}

\section{Proof of Theorem \ref{mm}}

\begin{se}[{Quantum statistical mechanical system and Busemann function}]
The dynamics $\sigma_X=\sigma_{X,x_0}$ on $A_X^{\dagger}$ is given in terms of the Busemann function (compare with Coornaert \cite{CoornaertPS}, or Lott \cite{Lott} for the similar case of Kleinean groups), defined for $x_1,x_2 \in T_X$ and $x \in \Lambda_X$ by 
$$ B(x_1,x_2,x) =\lim_{{x_3 \in T_X} \atop{x_3\to x}} d(x_1,x_3)-d(x_2,x_3), $$
with $d$ the tree distance. The automorphisms $\sigma_t$ are defined on elements of the algebraic crossed product, i.e., finite sums of the form $$ f = \sum_{\gamma \in \Gamma_X} f_\gamma \mu_\gamma$$ with $f_\gamma \in C(\Lambda_X)$, and then extended to the full $C^*$-algebra (full = reduced in this case). Choose a ``base point'' $x_0 \in T_X$. Then one defines $\sigma \colon \mathbf{R} \rightarrow \mathrm{Aut}(A_X^{\dagger}) \colon t \mapsto \sigma_t$ by 
\begin{equation}\label{sigmaB}
\sigma_{X,x_0,t}( \sum_{\gamma} f_\gamma \mu_\gamma) = \sum_{\gamma} e^{it B(x_0,\gamma ^{-1} x_0,-)}  f_\gamma \mu_\gamma.
\end{equation}
This indeed defines a strongly continuous one-parameter subgroup of automorphisms of $A_X$ (respecting $A_X^{\dagger}$); the proof is the same as that of Proposition 4.2 in \cite{Lott}. Actually, it defines a field of such automorphisms groups, by varying the base point $x_0$, but when no confusion can arise, we will leave out the base point from the notation and simply write $\sigma_X = \sigma_{X,x_0}$.

One calls $(A^{\dagger}_X, \sigma_X)$ the \emph{quantum statistical mechanical system} associated to the graph $X$ and $\sigma_X$ the \emph{time evolution} of the system (cf.\ \cite{BR} for background on this terminology). An \emph{isomorphism} of two such systems $(A^{\dagger}_X,\sigma_X)$ and $(A^{\dagger}_Y,\sigma_Y)$ is a Banach algebra isomorphism $\varphi \colon A^{\dagger}_X \isomto A^{\dagger}_Y$ that intertwines the time evolutions: $ \varphi \circ \sigma_X = \sigma_Y \circ \varphi.$  
\end{se} 

\begin{remark} We have defined the system in terms of the non-involutive algebra $A_X^{\dagger}$. The above time evolution extends easily to the full $C^*$-algebra $A_X$ and is seen to respect the subalgebra $A_X^{\dagger}$, and traditionally, a quantum statistical mechanical systems consists of such a $C^*$-algebra $A_X$ with time evolution $\sigma_X$. One may also describe our notion of isomorphism as an isomorphism of these ``involutive'' systems $(A_X,\sigma_X) \isomto (A_Y,\sigma_Y)$ that furthermore maps $A_X^{\dagger}$ to $A_Y^{\dagger}$. 
\end{remark}

Continuing with the proof of Theorem \ref{mm}, suppose given an isomorphism
\begin{equation}\label{alphacross}
\varphi \colon (A_X^{\dagger}, \sigma_X) \stackrel{\simeq}{\longrightarrow} (A_Y^{\dagger}, \sigma_Y)
\end{equation}
of quantum statistical mechanical systems. The first thing to observe is that such an isomorphism induces (by pull-back) a homeomorphism between the Choquet-complexes of KMS$_\beta$-states of the systems $(A_X,\sigma_X)$ and $(A_Y,\sigma_Y)$ (for any ``temperature'' $\beta$), cf.\ \cite{CM}, Lemma 1.7. 

The second observation is the fact that the system $(A_X,\sigma_X)$ has a \emph{unique} $\beta$ for which the space of $\mathrm{KMS}_\beta$-states is non-empty. The reason is that there is a correspondence between $\mathrm{KMS}_\beta$-states and conformal measures (compare with \cite{Lott} 5.11 for the case of Kleinean groups). Namely,  
since the dynamical system $(\Lambda_X,\Gamma_X)$ is the same as the full shift space on chosen generators for the free group, and left shift is an expansive map on this space, we can use the work of Kumjian and Renault \cite{Renault}: the $C^*$-algebra $A_X$ is the reduced $C^*$-algebra of the transformation groupoid $\Lambda_X \times \Gamma_X$. To the base point, we associate a one-cocyle $c_{x_0} \in Z^1(\Lambda_X \times \Gamma_X,\mathbf{R})$ by 
$$ c_{x_0}(\xi,\gamma):=B(x_0, x_0 \gamma^{-1}, \xi). $$
Now if $\gamma \xi = \xi$ and $c_{x_0}(\xi, \gamma)=0$, then $c_{x_0}^{-1}(0)$ is a principal subgroupoid of the transformation groupoid. Indeed, since $\gamma$ is a hyperbolic isometry of the tree, the classification of Tits \cite{Tits} implies that $\gamma$ acts by a translation with amplitude $\mathrm{am}(\gamma)$ along the double sided infinite path in the tree $T_X$ that connects the two fixed points of $\gamma$ on the boundary, called the \emph{axis} of $\gamma$. One of these fixed points is $\xi$, and if we let $x_3 \in T_X$ approach this point along the axis of $\gamma$, it is obvious from the definition of the Busemann function that $$B(x_0,x_0\gamma^{-1},\xi) = \pm \mathrm{am}(\gamma).$$ Hence if this cocycle is zero, we find $\mathrm{am}(\gamma)=0$, and hence $\gamma=1$. 	
This means we can indeed apply the result of Kumjian and Renault, and conclude that the $\mathrm{KMS}_\beta$-states correspond to \emph{conformal measures} of dimension $\beta$ on $\Lambda_X$, i.e., probability measures $\mu$ on $\Lambda_X$ that satisfy the scaling condition
$$ \mu(\gamma \xi) = e^{\beta B(x_0,x_0 \gamma^{-1},\xi)} \mu(\xi). $$
Now Coornaert (\cite{CoornaertPS}, Section 8) has classified such conformal measures: they only occur for $\beta=\delta(\Lambda_X)$ the critical exponent of the Poincar\'e series of $\Gamma$, and they are exactly the Patterson-Sullivan measures $\mu_{x_0}$ for varying base points $x_0$ (and these are normalized Hausdorff measures on $\Lambda_X$). 

The conclusion is that $(A_X,\sigma_{X,x_0})$ has a unique $\beta$ for which there exists a $\mathrm{KMS}_{\beta}$-state. Actually, from the correspondence between states and measures, this $\mathrm{KMS}_\beta$-state is given by 
$$ \tau_{X,x_0} ( \sum_{\gamma} f_\gamma \mu_\gamma) = \int_{\Lambda_X} f_{1} d\mu_{X,x_0}. $$ 

Now to finish the proof, recall from the previous section that the $\dagger$-algebra map $\varphi$ induces a homeomorphism $\Phi \colon \Lambda_X \rightarrow \Lambda_Y$ that is equivariant w.r.t.\ a group isomorphism $\alpha \colon \Gamma_X \rightarrow \Gamma_Y$. Since the unique normalized $\mathrm{KMS}_\beta$-state for $Y$ (with $\beta$ the Hausdorff dimension of $\Lambda_Y$, and chosen base point $y_0$) pulls back via $\varphi$ to a $\mathrm{KMS}_\beta$-state on $X$, and this (normalized) state is unique and given by $\tau_{X,x_0}$ for some base point $x_0$, and we find
$$ \varphi^* \tau_{Y,y_0} = \tau_{X,x_0},$$
which translates into 
$$ \int_{\Lambda_Y} f_1 \circ \Phi^{-1} d\mu_{Y,y_0} = \int_{\Lambda_X} f_1 d\mu_{X,x_0}, $$
for any function $f_1 \in C(\Lambda_X)$. 
This means that $\Phi$ is a measure-preserving (in particular, absolutely continuous) homeomorphism of the limit sets in their respective Patterson-Sullivan measures based at the points $x_0$, respectively $y_0$, that is equivariant for the given group isomorphism. Actually, since different Patterson-Sullivan measures for different choices of base points are mutually absolutely continuous, we find that for any choice of such measures, $\Phi$ is absolutely continuous. 

Now the rigidity theorem for hyperbolic group actions implies that $\Phi$ is M\"obius on the boundary (meaning that it respects the generalized ``cross ratio'' on the boundary \cite{Paulin}, Theorem A). Hence it extends to an isometry between the trees $T_X$ and $T_Y$ that intertwines the respective group actions (this extension is defined by coordinatizing a point in the tree as the unique intersection of three half lines ending at points on the boundary, and defining the image point as the point coordinatized in this way by the three image points under the boundary map of the original three boundary points; compare \cite{CoornaertCRAS}, \cite{JWgraphs}). Hence finally, it induces a graph isomorphism $$\Phi \colon X \isomto \Gamma_X \backslash T_X \isomto \Gamma_Y \backslash T_Y \rightarrow Y. $$ This finishes the proof of the theorem.  

\begin{remark}
We do not know whether one can relax the conditions in the theorem by leaving out the dagger algebra; is it true that an isomorphism of quantum statistical mechanical systems $(A_X,\sigma_X) \isomto (A_Y,\sigma_Y)$ suffices to conclude that the graphs $X$ and $Y$ are isomorphic? 
\end{remark}

\begin{remark}
Analogously, one can try to define a time-evolution on the Cuntz-Krieger algebra (or a non-involutive subalgebra of it) of the vertex- or edge-adjancency matrix of the graph, and study whether the resulting quantum statistical mechanical system reconstructs the graph. We have not completed such a study. 
\end{remark}


\begin{thebibliography}{10}

\bibitem{Bass}
Hyman Bass, \emph{The {I}hara-{S}elberg zeta function of a tree lattice},
  Internat. J. Math. \textbf{3} (1992), no.~6, 717--797.

\bibitem{Berard}
Pierre B{\'e}rard, \emph{Vari\'et\'es riemanniennes isospectrales non
  isom\'etriques}, Ast\'erisque (1989), no.~177-178, Exp.\ No.\ 705, 127--154,
  S{\'e}minaire Bourbaki, Vol. 1988/89.

\bibitem{graphdeck}
John~Adrian Bondy, \emph{A graph reconstructor's manual}, Surveys in
  combinatorics, 1991 ({G}uildford, 1991), London Math. Soc. Lecture Note Ser.,
  vol. 166, Cambridge Univ. Press, Cambridge, 1991, pp.~221--252.

\bibitem{BC}
Jean-Beno{\^{\i}}t Bost and Alain Connes, \emph{Hecke algebras, type {III}
  factors and phase transitions with spontaneous symmetry breaking in number
  theory}, Selecta Math. (N.S.) \textbf{1} (1995), no.~3, 411--457.

\bibitem{BR}
Ola Bratteli and Derek~W. Robinson, \emph{Operator algebras and quantum
  statistical mechanics. vol.\ 1 and 2}, second ed., Texts and Monographs in
  Physics, Springer-Verlag, Berlin, 1997.

\bibitem{CoornaertCRAS}
Michel Coornaert, \emph{Rigidit\'e ergodique de groupes d'isom\'etries
  d'arbres}, C. R. Acad. Sci. Paris S\'er. I Math. \textbf{315} (1992), no.~3,
  301--304.

\bibitem{CoornaertPS}
Michel Coornaert, \emph{Mesures de {P}atterson-{S}ullivan sur le bord d'un espace
  hyperbolique au sens de {G}romov}, Pacific J. Math. \textbf{159} (1993),
  no.~2, 241--270.

\bibitem{CDP}
Michel Coornaert, Thomas Delzant, and Athanase Papadopoulos,
  \emph{G\'eom\'etrie et th\'eorie des groupes}, Lecture Notes in Mathematics,
  vol. 1441, Springer-Verlag, Berlin, 1990.

\bibitem{CLM}
Gunther Cornelissen, Oliver Lorscheid, and Matilde Marcolli, \emph{On the
  {$K$}-theory of graph {$C^*$}-algebras}, Acta Appl. Math. \textbf{102}
  (2008), no.~1, 57--69.

\bibitem{CM}
Gunther Cornelissen and Matilde Marcolli, \emph{Quantum statistical mechanics,
  {$L$}-series, and anabelian geometry}, preprint arxiv:1009.0736, 2010.

\bibitem{DK}
Kenneth~R. Davidson and Elias~G. Katsoulis, \emph{Nonself-adjoint operator
  algebras for dynamical systems}, Operator structures and dynamical systems,
  Contemp. Math., vol. 503, Amer. Math. Soc., Providence, RI, 2009, pp.~39--51.

\bibitem{JWgraphs}
Jan~Willem de~Jong, \emph{Graphs, spectral triples and {D}irac zeta functions},
  p-Adic Numbers Ultrametric Anal. Appl. \textbf{1} (2009), no.~4, 286--296.

\bibitem{HP}
Eugene Ha and Fr{\'e}d{\'e}ric Paugam, \emph{Bost-{C}onnes-{M}arcolli systems
  for {S}himura varieties. {I}. {D}efinitions and formal analytic properties},
  IMRP Int. Math. Res. Pap. (2005), no.~5, 237--286.

\bibitem{Paulin}
Sa'ar Hersonsky and Fr{\'e}d{\'e}ric Paulin, \emph{On the rigidity of discrete
  isometry groups of negatively curved spaces}, Comment. Math. Helv.
  \textbf{72} (1997), no.~3, 349--388.

\bibitem{Klingen}
Norbert Klingen, \emph{Arithmetical similarities}, Oxford Mathematical
  Monographs, Oxford University Press, 1998.

\bibitem{Renault}
Alex Kumjian and Jean Renault, \emph{K{MS} states on {$C^*$}-algebras
  associated to expansive maps}, Proc. Amer. Math. Soc. \textbf{134} (2006),
  no.~7, 2067--2078 (electronic).

\bibitem{Lott}
John Lott, \emph{Limit sets as examples in noncommutative geometry}, $K$-Theory
  \textbf{34} (2005), no.~4, 283--326.

\bibitem{R}
Guyan Robertson, \emph{Boundary operator algebras for free uniform tree
  lattices}, Houston J. Math. \textbf{31} (2005), no.~3, 913--935 (electronic).

\bibitem{Rordam}
Mikael R{\o}rdam, \emph{Classification of {C}untz-{K}rieger algebras},
  $K$-Theory \textbf{9} (1995), no.~1, 31--58.

\bibitem{Sorensen}
Adam~P.W. S{\o}rensen, \emph{Geometric classification of simple graph
  algebras}, preprint arXiv:1111.1592, to appear in Ergodic Th.\ Dyn.\ Sys.,
  2011.

\bibitem{Tits}
Jacques Tits, \emph{Sur le groupe des automorphismes d'un arbre}, Essays on
  topology and related topics ({M}\'emoires d\'edi\'es \`a {G}eorges de
  {R}ham), Springer, New York, 1970, pp.~188--211.

\end{thebibliography}
\end{document}